\DeclareMathOperator{\Hom}{Hom}
\DeclareMathOperator{\id}{id}
\DeclareMathOperator{\MC}{MC}
\newtheorem{theorem}{Theorem}[section]
\newtheorem{proposition}[theorem]{Proposition}
\newtheorem{Corollary}[theorem]{Corollary}
\newtheorem{Lemma}[theorem]{Lemma}
\theoremstyle{remark}
\newtheorem{remark}{Remark}
\theoremstyle{definition}
\newtheorem{definition}[theorem]{Definition}
\title{An Operadic Generalization of the Gerstenhaber-Shack Theorem}
\author{Andy Yu}
\date{}
\begin{document}

\maketitle

\begin{abstract}
    %The formatting shall be normal when the texts are transported to a separate Overleaf file.
A simplicial cochain complex can be derived from a locally small poset by taking the nerve of the poset viewed as a category. We show that the simplicial cochain complex and a relative Hochschild cochain complex of the incidence algebra of the poset are isomorphic as operads with multiplications. This result implies that the hG-algebras derived from those operads are isomorphic, which is a generalization of the Gerstenhaber-Shack theorem. The isomorphism also induces a differential graded Lie algebra isomorphism, which we use to compute the moduli space of formal deformations of the incidence algebra.
\end{abstract}

\section*{Introduction} 
Let $P$ be a locally finite poset, which means for any $x_1, x_2 \in P$, the set $\{ x \in P \mid x_1 \leq x \leq x_2 \}$ is finite. We may regard $P$ as a category. Given $k$ a field of characteristic 0, we can construct an incidence algebra $kP$ \cite[section 2]{Webb2008_An_Introduction_to_the_Representations_and_Cohomology_of_Categories}. Let $S \subset R P$ be the subalgebra generated by all the identity morphisms in $P$. Gerstenhaber and Shack showed that $C^\bullet(P; k)$ is isomorphic to the relative Hochschild cochain complex $C^\bullet(R P, S; k P)$ \cite{GERSTENHABER1983143SimpIsHoch(Coho)}. 

For an operad $\mathcal{O}$ of a non-negatively graded $k$-vector space, we can choose an element $m \in \mathcal{O}(2)$ that is associative. A pair $( \mathcal{O}, m)$ is called an operad with multiplication. It has been shown that an operad with multiplication can be made into a hG-algebra, which contains a differential graded associative algebra (DGA) structure \cite{Voronov_Gerstenhaber_1995_hGA_And_Moduli_Space_Operad},  \cite{VG1995HigherOperationsHC}. 

We introduce the operad structures on the Hochschild and simplicial cochain complexes, in section 1, such that the DGA structures within coincide with the Hochschild and the simplicial cochain complexes. In section 2, we show that the isomorphism map described in \cite[page 137, 138]{GS.NATO} is in fact an isomorphism of two operads with multiplications, it is consequently a DGA isomorphism, which implies the Gerstenhaber-Shack theorem. In section 3, we compute the moduli space of deformations of the incidence algebra $kP$ using the property that Maurer-Cartan space functor preserves quasi-isomorphism of differential graded Lie algebras up to homotopy equivalence \cite{Getzler_2009_Lie_Theory_For_Nil_L_inf_Algebras}.

\paragraph{Conventions} We use $k$ to denote a field of characteristic 0. While some operads in this paper are operads of $k$-vector spaces, we suppress the phrase and simply say operads instead. In this paper, $\mathbb{Z}^+$ denotes the positive integers, and $\mathbb{N}$ contains 0.

Let $M = \bigoplus_{n \in \mathbb{Z}} M^n$ be a graded $k$-vector space, $f \in \Hom( M^{\otimes n}, M)$.
\begin{itemize} 
    \item For $x \in M^{n}$, let $|x| := n$.
    \item Let $M[n]$ be the shifted graded $k$-vector space defined by $(M[n])^m \coloneqq M^{m + n}$.
    \item For $x \in M^{n}$, let the same element be denoted by $sx \in (M[1])^{n - 1}$. Then $|sx| = n - 1$.
    \item The degree of a map $f: M \rightarrow M$ is defined by $\deg f := |f(v_1, \ldots, v_n)| - (|v_1| + \cdots + |v_n|)$.
    \item Let $\sigma \in S^{n}$ be a permutation of $n$ elements. Let $\kappa$ be the \textit{Koszul sign} defined by 
    \[
    x_1 x_2 \cdots x_n = \kappa x_{\sigma (1)} x_{\sigma (2)} \cdots  x_{\sigma (n)},
    \]
    for $x_i \in M$, $x_1 x_2 \cdots x_n$ in the free graded-commutative algebra $S^\bullet (M)$.
\end{itemize}

%The \textit{(antisymmetric) Koszul sign} is defined by 
%\[
%\kappa( \sigma; x_1, \ldots, x_n) := \sgn( \sigma) \eta ( \sigma; x_1, \ldots, x_n).
%\]

\paragraph{Acknowledgments} I am grateful to my advisor Alexander A. Voronov from the University of Minnesota, who introduced me to rational homotopy theory, operads, and deformation theory, for his continuous support, guidance, and belief in my capability. I am also thankful to Pranjal Dangwal from the University of Minnesota, who had fruitful discussions with me on deformation theory and rational homotopy theory, and partially accompanied me on this project.

\section{Operads and Homotopy G-Algebras}

\subsection{Preliminaries on Operads}

\begin{definition}
    A \textit{non-$\Sigma$ operad} is a collection of $k$-vector spaces $\mathcal{O}(n)$, $n \in \mathbb{N}$, along with operations $\gamma: \mathcal{O}(k) \otimes \mathcal{O}(n_1) \otimes \ldots \otimes \mathcal{O}(n_k) \rightarrow \mathcal{O}(n_1 + \ldots + n_k)$, called compositions, and a distinguished element $\id \in \mathcal{O}(1)$, called the identity, that satisfy the identity and associativity of composition axioms, see more detail in \cite{MarklStasheff2002OperadsIA}.
    A \textit{sub-operad} $\mathcal{P} \subseteq \mathcal{O}$ is an operad with respect to the restriction of the operations $\gamma_{|\mathcal{P}}$.
    A morphism of operads is a map that preserves the identity and compositions.
\end{definition}

For $x \in \mathcal{O}(0)$, we require $\gamma(x; \ ) := x$. Since we will only concern with non-$\Sigma$ operads, we refer to a non-$\Sigma$ operad simply as an operad.

\begin{definition}
    For $f \in \mathcal{O}(m)$, $m \geq 1$, and $g \in \mathcal{O}(n)$, $n \geq 0$, let $\circ_j: \mathcal{O}(m) \otimes \mathcal{O}(n) \rightarrow \mathcal{O}(m+n-1)$, $1 \leq j \leq m$, be a collection of maps be defined by 
    $f \circ_j g := \gamma (f; \mathrm{id}, \ldots, \mathrm{id}, g, \mathrm{id}, \ldots, \mathrm{id})$, where $g$ is placed in the $j$th input (excluding $f$) of $\gamma$.
\end{definition}

The maps $\circ_j$ come in handy. We can recover $\gamma$ by repeatedly composing the $\circ_j$ products.
 \begin{equation*} \label{operad composition map identity}
        \gamma ( f; f_1, \ldots, f_k) = (\cdots((f \circ_k f_k) \circ_{k-1} f_{k-1}) \cdots ) \circ_1 f_1.
    \end{equation*}
In addition, a map $F: \mathcal{O} \rightarrow \mathcal{P}$ is a morphism of operads if and only if for $f, g \in \mathcal{O}$, $F$ satisfies $F(f \circ_j g) = F(f) \circ_j F(g)$, and $F(\id_\mathcal{O}) = \id_\mathcal{P}$.

Now we exhibit relevant examples of operads.

\subsubsection*{Relative Hochschild Cochains As An Operad}
Let $A$ be an associative unital algebra over $k$. The endomorphism operad $\mathcal{E}nd_{A}$ is defined by $\mathcal{E}nd_{A} (n) = \Hom_R (A^{n}, A)$, $n \geq 0$. The operations $\gamma$ on $\mathcal{E}nd_{A}$ are defined by
\begin{multline} \label{Hochschild Gamma Def Eq}
    \gamma(f; f_1, \ldots, f_k) (a_1, \ldots, a_{n_1 + \cdots + n_k}) \\
    = f( f_1(a_1, \ldots, a_{n_1}), \ldots, f_k(a_{n_1 + \ldots n_{k-1}}, \ldots, a_{n_1 + \ldots + n_k})).
\end{multline}
Since $\Hom_R (A^n, A) = C^n(A; A)$, the Hochschild cochain complex $C^\bullet(A; A)$ is the same as $\mathcal{E}nd_{A}$, provided the Hochschild differential is ignored. Let $S$ be a subalgebra of $A$. Recall the definition of an $S$-relative Hochschild cochain complex \cite[page 32]{GS.NATO}. The following proposition shows that $C^\bullet (A, S; A)$ is also an operad.

\begin{proposition}
    The relative Hochschild cochain complex $C^\bullet (A, S; A)$ with $\gamma$ defined in \eqref{Hochschild Gamma Def Eq} is a sub-operad of $\mathcal{E}nd_{A}$.
\end{proposition}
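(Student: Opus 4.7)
The plan is to unpack the relative condition and then check the two operad axioms directly. Recall that $f \in C^n(A, S; A)$ is a $k$-multilinear map $A^{\otimes n} \to A$ that is left $S$-linear in its first argument, right $S$-linear in its last argument, and $S$-balanced in between, i.e., $f(a_1, \ldots, a_i s, a_{i+1}, \ldots, a_n) = f(a_1, \ldots, a_i, s a_{i+1}, \ldots, a_n)$ for every $1 \leq i < n$ and every $s \in S$; equivalently, $f$ is an $S^e$-module map $A^{\otimes_S n} \to A$. The identity $\id \in \mathcal{E}nd_{A}(1)$ manifestly satisfies these conditions, so it lies in $C^1(A, S; A)$. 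Since $\gamma$ decomposes into an iterated $\circ_j$, closure of $C^\bullet(A, S; A)$ under $\gamma$ reduces to closure under each $\circ_j$.

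For the closure step, I would fix $f \in C^m(A, S; A)$ and $g \in C^n(A, S; A)$, set $h := f \circ_j g$, and verify the three conditions above for $h$ by case analysis on the position $i$ where an element $s \in S$ is inserted between $a_i$ and $a_{i+1}$ in the tuple $(a_1, \ldots, a_{m+n-1})$. When $i+1 < j$ or $i \geq j + n$, the insertion sits entirely inside a slot of $f$ and the balance identity for $h$ follows from the balance identity for $f$; when $j \leq i < j + n - 1$, it sits entirely inside a slot of $g$ and follows from the balance identity for $g$. The two genuinely mixed cases are the boundary positions: at $i = j-1$, I would use $f$'s balance to move $s$ past position $j-1$ into the slot occupied by $g(a_j, \ldots, a_{j+n-1})$, then use left $S$-linearity of $g$ to absorb $s$ into $a_j$; the case $i = j+n-1$ is symmetric, using right $S$-linearity of $g$ together with $f$'s balance. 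Left and right $S$-linearity of $h$ in its outermost slots are handled by an analogous split on whether $j = 1$ or $j = m$.

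The main obstacle is purely organizational: keeping track of the two boundary positions and the boundary values $j = 1$, $j = m$ without confusing indices. Conceptually, the point is that the operadic composition $\circ_j$ never needs to move an element of $S$ across a position unprotected by either $f$'s or $g$'s $S$-multilinearity, so the sub-operad property is structural once the definitions are unpacked.
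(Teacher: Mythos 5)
Your proposal is correct and follows essentially the same route as the paper: reduce closure under $\gamma$ to closure under the $\circ_j$ products, then check the three relative conditions by cases, with the two mixed boundary positions $i = j-1$ and $i = j+n-1$ handled exactly as you describe (move $s$ across $f$'s balance into the $g$-slot, then absorb it by $g$'s left or right $S$-linearity). Your explicit attention to the edge values $j=1$, $j=m$ and to $\id \in C^1(A,S;A)$ is slightly more careful than the paper's write-up, but the argument is the same.
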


\begin{proof}
    If $\gamma$ is closed in $C^\bullet (A, S; A)$, then its identity and associativity properties are inherited, so it suffices to show the closure of $\circ_j$ maps within $C^\bullet (A, S; A)$.
    Let $b \in S$, $f \in C^{p} (A, S; A)$, $g \in C^{q} (A, S; A)$. We need to verify the conditions 
    \begin{align} \label{Hochschild condition for composition in proof of sub-operad}
    \begin{split}
        (f \circ_j g)(ba_1,\ldots, a_{p + q - 1}) & = b (f \circ_j g)(a_1,\ldots, a_{p + q - 1}) \\
        (f \circ_j g) (\ldots, a_ib, a_{i+1},\ldots) & = (f \circ_j g) (\ldots, a_i, ba_{i+1},\ldots) \\
        (f \circ_j g) (a_1,\ldots, a_{p + q - 1} b) & = (f \circ_j g) (a_1,\ldots, a_{p + q - 1})b.
    \end{split}
    \end{align}
    We compute through definition and get
    \begin{multline*}
        (f \circ_j g)(ba_1,\ldots, a_{p + q - 1}) \\
        = f(ba_1,\ldots, a_{j - 1}, g(a_{j},\ldots, a_{j + q - 1}), a_{j + q}, \ldots,  a_{p + q - 1}) \\
        = bf(a_1,\ldots, a_{j - 1}, g(a_{j},\ldots, a_{j + q - 1}), a_{j + q}, \ldots,  a_{p + q - 1}) \\
        = b (f \circ_j g)(a_1,\ldots, a_{p + q - 1}).
    \end{multline*}
    The computation can be symmetrically carried out for the third line of \eqref{Hochschild condition for composition in proof of sub-operad}. For the second line, we have
    \begin{multline*}
        (f \circ_j g)(a_1,\ldots, a_{j-1}b, a_j, \ldots a_{p + q - 1})  \\
        = f(a_1,\ldots, a_{j - 1}b, g(a_{j},\ldots, a_{j + q - 1}), a_{j + q}, \ldots,  a_{p + q - 1}) \\
        = f(a_1,\ldots, a_{j - 1}, bg(a_{j},\ldots, a_{j + q - 1}), a_{j + q}, \ldots,  a_{p + q - 1}) \\
        = f(a_1,\ldots, a_{j - 1}, g(ba_{j},\ldots, a_{j + q - 1}), a_{j + q}, \ldots,  a_{p + q - 1}) \\
        = (f \circ_j g)(a_1,\ldots, a_{j-1}, ba_{j}, \ldots a_{p + q - 1}).
    \end{multline*}
    Similarly, the same holds for $i = j + q - 1$. We omit computations for other $i$'s as they are inherited from $C^\bullet (A, S; A)$. Thus $C^\bullet (A, S; A)$ is a sub-operad.

\end{proof}

We will use the cochain complex notation $C^\bullet(A, S; A)$, but regard it as an operad. Explicitly, for $n > 0$,  $C^n(A, S; A)$ consists of $f \in \mathrm{Hom}_k(A^n,M)$ satisfying
\begin{align*}\label{}
f(b a_1,\ldots, a_n) & = b f(a_1,\ldots, a_n) \\
f(a_1,\ldots, a_i b, a_{i+1},\ldots, a_n) & =  f(a_1,\ldots, a_i, b a_{i+1},\ldots, a_n) \\
f(a_1,\ldots, a_n b) & = f(a_1,\ldots, a_n) b 
\end{align*}
for $a_1,\ldots,a_n \in A$ and $b \in S$, and  $C^0(A, S; A) = \{ a \in A \mid a b = b a \mbox{ for all } b \in S \}$.

\subsubsection*{Simplicial Cochains As An Operad}
Let $C^\bullet(X; k)$ be the simplicial cochain complex of a simplicial set $X$. Let $\varphi, \varphi_1, \ldots, \varphi_k \in C^\bullet(X; k)$ with degrees $k, n_1, \ldots, n_k$. We define the $\gamma$ operations by 
\begin{multline*}
    \gamma( \varphi; \varphi_0, \ldots, \varphi_k) ( v_1, \ldots, v_{n_1 + \cdots + n_k}) = \varphi( v_0, v_{n_1}, v_{n_1 + n_2}, \ldots, v_{n_1 + \cdots + n_k}) \\
    \varphi_1 (v_0, v_1, \ldots, v_{n_1} ) \varphi_2 ( v_{n_1}, \ldots, v_{n_1 + n_2}) \cdots \varphi_k ( v_{n_1 + \cdots + n_{k-1}}, \ldots, v_{n_1 + \cdots + n_k}).
\end{multline*}
Then $\gamma$ determines an operad structure on the graded vector space $C^\bullet(X; k)$.

\subsection{The Homotopy G-Algebra of an Operad}

\begin{definition}
    A \textit{brace algebra} is a graded $k$-vector space $M = \bigoplus_{n \in \mathbb{Z}} M^n$ over $k$, together with a collection of multilinear braces $x\{ x_1, \ldots, x_m \}_{m \in \mathbb{N}}$ of degree $0$, such that
    \begin{equation*}
        x\{ \ \} = x,
    \end{equation*}
    \begin{multline*}
        x \{ x_1, \ldots, x_m\} \{y_1, \ldots, y_n\} \\
        = \sum (-1)^\varepsilon x\{ y_1, \ldots, y_{i_1}, x_1 \{ y_{i_1 + 1}, \ldots, y_{j_1} \},\\
        y_{j_1 + 1}, \ldots, y_{i_m}, x_m \{ y_{i_m + 1}, \ldots, y_n \},
    \end{multline*}
    where the sum takes over $0 \
    \leq i_1 \leq j_1 \leq \ldots \leq i_m \leq j_m \leq n$, and $\varepsilon := \sum_{p=1}^{m} (|x_p| - 1) \sum_{q=1}^{i_p} (|y_q| - 1) $.
\end{definition}

We use $E_k$ to denote the brace with $k$ inputs in the curly bracket:
\[
E_k(x; x_1, \dots, x_k) \coloneqq x\{x_1, \dots, x_k\},
\]
and $\{E_k \}_{k \in \mathbb{N}}$ to denote the collection of braces. We also use the notation
\[
f \circ g := f \{ g \}
\]
and call it the \emph{circle product}.

Given an operad $\{\mathcal{O}(n) \mid n \in \mathbb{N}\}$, consider the underlying graded vector space $\mathcal{O} \coloneqq \bigoplus_{n \in \mathbb{N}} \mathcal{O}(n)$. We can construct the brace algebra structure on the \emph{desuspension} $\mathcal{O}[1]$ of the graded vector space  $\mathcal{O}$  by defining the braces as follows, see  \cite[Proposition 1]{Voronov_Gerstenhaber_1995_hGA_And_Moduli_Space_Operad}:
    \begin{equation}
        \label{braces}
    sx \{ s x_1 ,\ldots, s x_n \} \coloneqq  
    \sum_{\mathclap{\sigma \in \mathrm{Sh}_{n, |x| - n} }} \kappa s \gamma ( s x; \sigma (s x_1, \ldots, s x_n, s \id, \ldots, s \id)),
    \end{equation}
    where $sx, sx_1, \dots, sx_n \in \mathcal{O}[1]$ and $\kappa$ is the Koszul sign for the composition of the permutations
    \begin{multline*}
        (s x_n, \ldots, s x_1, s, \ldots, s) \mapsto (s x_1, \ldots, s x_n, s, \ldots, s) \\
        \mapsto \sigma ( s x_1, \ldots, s x_n, s, \ldots, s).
    \end{multline*}

%\paragraph{Hochschild Differenial For Graded Algebra}
%Let $A$ be a graded associative algebra. The Hochschild %differential on $C^\bullet(A; A) \coloneqq (\Hom A^{\otimes %\bullet}, A)$ has slightly different signs on each term
%\begin{multline*}
%    d_\mathrm{H}f (a_1, \ldots, a_{n+1}) = (-1)^{|f|} %\Bigl( (-1)^{ |a_1| |f|} a_1 \cdot f( a_2, \ldots, a_{n%+1}) \\%
  %  + \sum_{i = 1}^n (-1)^{\sum_{j=1}^{i - 1} |a_{j}|} %f(a_1, \ldots, a_i \cdot a_{i+1}, \ldots, a_{n+1})\\
  %  + (-1)^{\sum_{j=1}^n |a_j|} f(a_1, \ldots, a_n) \cdot %a_{n+1} \Bigr).
%\end{multline*}

\begin{definition} \label{hGA definition}
    A \textit{homotopy G-Algebra} (hG-algebra) is a tuple $(M, \{ E_k \}, \cdot, d)$, where
    \begin{enumerate}
        \item $M$ is a graded $k$-vector space and $d$ is a differential on $V$ of degree +1.
        \item The triple $(M, \cdot, d)$ forms a DGA.
        \item $(M[1], \{ E_k \})$ is a brace algebra. 
        \item Let $x\{ s y_1, \ldots s y_n\}$ replace $s^{-1}( s x \{ s y_1, \ldots, s y_n\})$. Then we have 
        \[
        (s(x_1 \cdot x_2)) \{ sy_1, \ldots, sy_n \} = \sum_{i=1}^n \chi_i s (x_1 \{ sy_1, \ldots, sy_i\} \cdot x_2 \{ sy_{i+1}, \ldots, sy_n \}),
        \]
        where $\chi_i$ is the Koszul sign for the permutation 
        \[
        (x_1, x_2, sy_1, \ldots, sy_n) \mapsto (x_1, sy_1, \ldots, sy_i, x_2, sy_{i+1}, \ldots, sy_n).
        \]
        \item The braces satisfy the identity%xLet $\delta (f):= df - (-1)^{|f|}fd$ be the differential on $\Hom(M[1]^{\otimes \bullet}, M[1])$, then 
        \begin{align*}
            d(& s x \{ s x_1, \ldots, s x_n \}) - d (s x) \{ s x_1, \ldots, s x_n\} \\
            +& \sum_{i = 1}^{n + 1} (-1)^{|s x|
            + \sum_{j = 1}^{i - 1} |s x_i|} s x \{ \ldots, d s x_i, \ldots\} \\
            = -&(-1)^{|s x_1| |x|} s ( x_1 \cdot x\{ s x_2, \ldots, s x_{n+1}\}) \\
            -& (-1)^{|x|}\sum_{i = 1}^{n+1}(-1)^{\sum_{j = 1}^{i - 1}|s x_i|} s x\{ s x_1, \ldots, s (x_{i} \cdot x_{i+1}), \ldots, s x_{n+1}\} \\
            +& (-1)^{|x| + \sum_{j = 1}^{n}|s x_i|} s( x\{ s x_1, \ldots, s x_n \} \cdot x_{n+1}),
        \end{align*}
        where the differential on $M[1]$ is defined by $d(sx) \coloneqq -sdx$.
        %\begin{multline*}
        %    (\delta E_k) (s x ; s x_1, \ldots, s x_k) = -(-1)^{|s x_1| |x|} s ( x_1 \cdot s^{-1} E_k (s x; s x_2, \ldots, s x_{n+1})) \\ 
        %    - (-1)^{|x|}\sum_{i = 1}^{n+1}(-1)^{\sum_{j = 1}^{i - 1}|s x_i|} E_k( s x; s x_1, \ldots, s (x_{i} \cdot x_{i+1}), \ldots, s x_{n+1}) \\
        %    + (-1)^{|x| + \sum_{j = 1}^{n}|s x_i|} s( s^{-1} E_k (s x; s x_1, \ldots, s x_n ) \cdot x_{n+1}).
        %\end{multline*}
        %\begin{multline*}
%            (\delta E_k) (s x ; s x_1, \ldots, s x_k) = -(-1)^{|s x_1| |x|} s ( x_1 \cdot x\{ s x_2, \ldots, s x_{n+1}\}) \\ 
 %           - (-1)^{|x|}\sum_{i = 1}^{n+1}(-1)^{\sum_{j = 1}^{i - 1}|s x_i|} s x\{ s x_1, \ldots, s (x_{i} \cdot x_{i+1}), \ldots, s x_{n+1}\} \\
  %          + (-1)^{|x| + \sum_{j = 1}^{n}|s x_i|} s( x\{ s x_1, \ldots, s x_n \} \cdot x_{n+1}).
   %     \end{multline*}
    \end{enumerate}
\end{definition}

\begin{definition} \label{Operad with multiplication}
    A \textit{multiplication on an operad} $\mathcal{O}$ is an element $m \in \mathcal{O}(2)[1]$ such that $m \circ m = 0$.
    An \textit{operad with multiplication} is an operad $\mathcal{O}$ with a specific choice of multiplication.
\end{definition}

Given an operad with multiplication $(\mathcal{O}, m)$, we can construct an hG-algebra structure on $\mathcal{O}$ by the following, see \cite[Theorem 3]{Voronov_Gerstenhaber_1995_hGA_And_Moduli_Space_Operad}:

\begin{itemize}
    \item Define the braces on $\mathcal{O}[1]$ by \eqref{braces};
    % \[
    % sx \{ s x_1 ,\ldots, s x_n \} :=  
    % \sum_{\mathclap{\sigma \in \mathrm{Sh}_{n, |x| - n} }} \kappa s \gamma ( s x; \sigma (s x_1, \ldots, s x_n, s \id, \ldots, s \id)),
    % \]
    % where $\kappa$ is the Koszul sign for the composition of the permutations
    % \begin{multline*}
    %     (s x_n, \ldots, s x_1, s, \ldots, s) \mapsto (s x_1, \ldots, s x_n, s, \ldots, s) \\
    %     \mapsto \sigma ( s x_1, \ldots, s x_n, s, \ldots, s).
    % \end{multline*}
    \item Define the dot product on $\mathcal{O}$ by for $x, y \in \mathcal{O}$, $x \cdot y \coloneqq (-1)^{|x|} s^{-1} (m \{sx, sy\})$;
    \item Define the differential $d$ on $\mathcal{O}[1]$ by $d (sx) \coloneqq m \circ sx -(-1)^{|sx|} sx \circ m$. The differential $d$ on $\mathcal{O}$ is determined by $d(sx) = -s dx$.
\end{itemize}

The construction of the hG-algebra from an operad $\mathcal{O}$ may be viewed as a functor from the category of operads with multiplications to the category of hG-algebras. The same statement can be made about the DGA in \ref{hGA definition}.2. We describe them as \say{$\mathcal{O}$ viewed as a hG-algebra} or \say{$\mathcal{O}$ viewed as a DGA} when we are interested in theses structures of the operad $\mathcal{O}$.

\subsubsection*{Relative Hochschild and Simplicial Cochain Complexes Regarded As Operads With Multiplications}
Let $m_\mathrm{H} \in \mathcal{E}nd_{A}(2)[1]$ be defined by $m_\mathrm{H} (a_1, a_2) := a_1 a_2$ for $a_1, a_2 \in A$. Since $A$ is an associative algebra, $m_\mathrm{H}$ is a multiplication on the operad $\mathcal{E}nd_{A}$. 
%Proof relative sub-operad multiplication part
In addition, we have $m_\mathrm{H} \in C^2 (A, S; A)$. So the pair $(C^\bullet (A, S; A), m_\mathrm{H})$ can also be regarded as an operad with multiplication.

%Simplicial uperad multiplication
Let $m_\mathrm{S} \in C^2(P;R)$ be the constant cochain defined by $m_\mathrm{S} (v_0, v_1, v_2) := 1_R$ for any 2-chain $(v_0, v_1, v_2)$. It is clear that $m_\mathrm{S} \circ m_\mathrm{S} = 0$. Then the pair $(C^\bullet(X; k), m_\mathrm{S})$ is an operad with multiplication. For the simplicity of notation, we will suppress the multiplication symbols.

\begin{remark}
    The circle product $f \circ g $ on a Hochschild cochain complex $C^\bullet(A; A)$ is in fact the Gerstenhaber composition product \cite[page 85]{GS.NATO}. The circle product on a simplicial cochain complex $C^\bullet(X; k)$ is the $\smile_1$ product defined by Steenrod \cite{SteenrodCup1}, up to the sign $(-1)^{|f|(|g| + 1 )}$.
\end{remark}

\begin{remark}[On Signs and Conventions of Cochain Complexes]
\label{Remark On Signs Conventions of Hochschild Cochain Complex}
    The dot product defined on the simplicial cochain complex $C^\bullet(X; k)$ by the homotopy G-algebra structure coincides with the standard cup product up to a sign: $x \cdot y = (-1)^{|x||y|} x \smile y$. Similarly, the differentials differ by the sign $(-1)^{|x|}$. Either way, we get isomorphic DGA structures on $C^\bullet(X; k)$. The same statement can be made about $C^\bullet(A, S; A)$, where the classical DGA structure on $C^\bullet(A, S; A)$ is defined in \cite{GS.NATO} and \cite{Hochschild_Original_Paper_CohomologyOfAssoAlg}.
\end{remark}

\begin{remark}
    One may observe that braces in a hG-algebra distinguish the first input of the braces from the rest, and ask if there is an extension of the concept that distinguishes the first $n$ inputs of operations of similar kinds, i.e. an algebra $A$ with a collection of maps $E_{p,q}: A^{\otimes p} \otimes A^{\otimes q} \rightarrow A$ that satisfies similar identities. Such extensions exists and are called \textit{Hirsch Algebras} or \textit{extended homotopy algebras}. Discussions of such algebras may be found at \cite{saneblidze2016filteredhirschalgebras} and \cite{kadeishvili2002cochainoperationsdefiningsteenrod}.
\end{remark}

\section{An Operadic Generalization of the Ger\-sten\-ha\-ber-Shack Theorem}

\begin{definition}[Category Algebra]
    Let $\mathcal{C}$ be a locally small category, $R$ be an unital commutative ring. The elements of the category algebra $R \mathcal{C}$ are formal sums $\sum_{f \in \hom(\mathcal{C})} r_f f$. The product on $R \mathcal{C}$ is defined on the generators by
    \begin{equation*}
    r_1 f \cdot r_2 g :=  \left\{ \begin{array}{rcl}
         &  (r_1 r_2) (f g) & \text{if $f$ and $g$ are composable}\\
         &  0 & \text{otherwise}.
    \end{array} \right.
    \end{equation*}
\end{definition}

Let a locally finite poset $P$ be viewed as a category: the objects are elements, and the $\leq$ relations are morphisms. The axioms of category are satisfied by the transitivity and reflexivity of the relation $\leq$. Let $C^\bullet(P; k)$ be the simplicial cochain complex associated to the nerve of $P$. Let $kP$ denote the category algebra of the category $P$ over $k$, also known as the \emph{incidence algebra} of $P$ as a poset. Let $S \subseteq kP$ be the subalgebra generated by the identity morphisms in $P$. For $i, j \in P$, if $i \leq j$, we denote the unique morphism from $i$ to $j$ by $E^{ij}$.

Let $\Phi: C^\bullet(P; k) \rightarrow C^\bullet(kP, S; kP)$ be defined by 
\begin{align*}
    (\Phi f)&:=\sum_{i \in S} f(i)E^{ii} & \mbox{ for }f \in C^\bullet(P; k) (0),\\
    (\Phi f)(E^{i_0i_1},\ldots E^{i_{n-1}i_n})&:=f(i_0,\ldots,i_n)E^{i_0i_n} & \mbox{ for }f \in C^\bullet(P; k) (n), n>0.
\end{align*}

\begin{theorem}[Main Theorem] \label{operad isomorphism}
    The map $\Phi$ is an isomorphism of operads with multiplication.
\end{theorem}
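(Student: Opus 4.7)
The plan is to verify the four conditions that together ensure $\Phi$ is an isomorphism of operads with multiplication: (a) $\Phi$ is well-defined, that is, its image lies in $C^\bullet(kP, S; kP)$; (b) $\Phi$ is a bijection in each arity; (c) $\Phi$ intertwines the operad compositions $\gamma$ and preserves $\id$; (d) $\Phi(m_\mathrm{S}) = m_\mathrm{H}$.

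For (a), I extend $\Phi f$ by $k$-linearity on tensors of basis morphisms of $kP$, declaring it to vanish on any tuple $(E^{i_0 i_1}, \ldots, E^{i_{n-1} i_n})$ that does not form a composable chain. Since $S$ is spanned by the idempotents $E^{ii}$, the $S$-bimodule axioms reduce to checking each identity with $b = E^{jj}$; in each case the inserted $E^{jj}$ either destroys composability on both sides or matches the appropriate source/target, and the two resulting expressions coincide with $f(i_0, \ldots, i_n) E^{i_0 i_n}$. For (b), the bimodule conditions force every $g \in C^n(kP, S; kP)$ to vanish on non-composable tuples, while on a composable chain $g(E^{i_0 i_1}, \ldots, E^{i_{n-1} i_n})$ is sandwiched on the left by $E^{i_0 i_0}$ and on the right by $E^{i_n i_n}$, hence must be a scalar multiple of $E^{i_0 i_n}$; reading off this scalar defines a two-sided inverse. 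The degree-zero case is analogous, using that $C^0(kP, S; kP)$ equals the centralizer of $S$ in $kP$, which is $S$ itself.

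The core step is (c). Evaluating $\gamma(\Phi f; \Phi f_1, \ldots, \Phi f_k)$ on a composable chain $(E^{i_0 i_1}, \ldots, E^{i_{N-1} i_N})$ with $N = n_1 + \cdots + n_k$ and $s_j := n_1 + \cdots + n_j$, each inner factor produces $f_j(i_{s_{j-1}}, \ldots, i_{s_j}) E^{i_{s_{j-1}} i_{s_j}}$, since its block of inputs is itself a composable chain. Applying $\Phi f$ to these $k$ composable morphisms pulls out, by $k$-linearity, the product of the scalars $f_j(i_{s_{j-1}}, \ldots, i_{s_j})$ together with $f(i_0, i_{s_1}, \ldots, i_{s_k}) E^{i_0 i_N}$. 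Collecting factors yields exactly the simplicial formula for $\gamma(f; f_1, \ldots, f_k)(i_0, \ldots, i_N)$ multiplied by $E^{i_0 i_N}$, which is $\Phi(\gamma(f; f_1, \ldots, f_k))(E^{i_0 i_1}, \ldots, E^{i_{N-1} i_N})$. The identity $\id$ is handled by a direct check, and (d) is immediate since $\Phi(m_\mathrm{S})(E^{i_0 i_1}, E^{i_1 i_2}) = 1 \cdot E^{i_0 i_2} = E^{i_0 i_1} \cdot E^{i_1 i_2}$.

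The main obstacle is combinatorial bookkeeping: tracking which tuples of basis morphisms contribute nontrivially to the Hochschild $\gamma$ and confirming this coincides precisely with the composable chains on which the simplicial $\gamma$ is evaluated, and similarly that the zero-extension of $\Phi f$ is consistent with every $S$-bimodule condition at every insertion position. No Koszul signs enter at this level because neither operad composition introduces any; the subtleties of signs and shifts are deferred to the subsequent hG-algebra passage, which the resulting isomorphism of operads with multiplication transports automatically.
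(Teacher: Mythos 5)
Your proposal is correct and takes essentially the same route as the paper: a direct computation on the basis morphisms $E^{i_0 i_1}, \ldots, E^{i_{n-1}i_n}$ showing that the scalars produced by the simplicial and Hochschild compositions agree, followed by the one-line check that $\Phi(m_{\mathrm{S}}) = m_{\mathrm{H}}$. The only differences are that you verify the full $\gamma$ rather than the partial compositions $\circ_j$ (equivalent, since each determines the other) and that you spell out the well-definedness and bijectivity of $\Phi$ in each arity, which the paper asserts as clear; these added details are sound.
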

\begin{proof}
    We first show that $\Phi$ is an isomorphism of operads. 
    For each $n \in \mathbb{N}$, the map $\Phi$ is clearly an isomorphism of bases of $C^n(P; k)$ and $C^n(kP, S; kP)$ for each $n$. Let $f \in C^p(P; k)$, $g \in C^q(P; k)$. It suffices to show that
    \[
    \Phi(f \circ_j g) = \Phi f \circ_j \Phi g.
    \]
    For $(E^{i_0 i_1}, \ldots, E^{i_{p+q-2} i_{p+q-1}}) \in C^{p + q - 1}$, we have 
    \begin{align*}
        & \Phi(f \circ_j g)(E^{i_0 i_1}, \ldots, E^{i_{p+q-2} i_{p+q-1}}) = f \circ_j g (i_0, \ldots, i_{p+q-1}) E^{i_0 i_{p+q-1}} \\
        & =  f( i_0, \ldots, i_{j-1}, i_{j+q-1}, \ldots, i_{p+q-1}) \cdot g( i_{j-1}, \ldots, i_{j+q-1}) E^{i_0 i_{p+q-1}}\\
        & =  \Phi f (E^{i_0 i_1}, \ldots, E^{i_{j-2} i_{j-1}},  E^{i_{j-1},  i_{j+q-1}}, E^{i_{j+q-1} i_{j+q}}, \ldots, E^{i_{p+q-2} i_{p+q-1}}) \\
        & \qquad \cdot g(i_{j-1}, \ldots, i_{j+q-1}) \\
        &  = \Phi f (E^{i_0 i_1}, \ldots, E^{i_{j-2} i_{j-1}}, g(i_{j-1}, \ldots, i_{j+q-1}) E^{i_{j-1}, i_{j+q-1}}, \\  
        & \qquad E^{i_{j+q-1} i_{j+q}}, \ldots, E^{i_{p+q-2} i_{p+q-1}} ) \\
        & = \Phi f (E^{i_0 i_1}, \ldots, E^{i_{j-2} i_{j-1}}, \Phi     g (E^{i_{j-1},  i_{j}} ,\ldots, E^{i_{j+q-2},  i_{j+q-1}}), \\ 
        & \qquad E^{i_{j+q-1} i_{j+q}}, \ldots, E^{i_{p+q-2} i_{p+q-1}}) \\
        & = \Phi f \circ_j \Phi g (E^{i_0 i_1}, \ldots, E^{i_{p+q-2} i_{p+q-1}}).
    \end{align*}t we show that $\Phi(m_\mathrm{S}) = m_{\mathrm{H}}$. For any $(E^{i_0 i_1}, E^{i_1 i_2}) \in RP^{k2}$, we have 
    \begin{equation*}
        \Phi(m_\mathrm{S}) (E^{i_0 i_1}, E^{i_1 i_2}) = m_\mathrm{S} (i_0, i_1, i_2) E^{i_0 i_2} = E^{i_0 i_2} = m_\mathrm{H} (E^{i_0 i_1}, E^{i_1 i_2}).
    \end{equation*}
    Since $\Phi (m_\mathrm{S})$ and $m_\mathrm{H}$ agree on the basis of $kP^2$, we conclude $\Phi (m_\mathrm{S}) = m_\mathrm{H}$.
\end{proof}

The next two corollaries follow immediately from Theorem \ref{operad isomorphism}.

\begin{Corollary} \label{hGA isomorphism Corollary}
    The map $\Phi: C^\bullet(P; k) \rightarrow C^\bullet(kP, S; kP)$ is an isomorphism of hG-algebras. \qed
\end{Corollary}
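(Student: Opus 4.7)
The plan is to invoke functoriality: the hG-algebra structure on an operad with multiplication is built entirely from the operad composition $\gamma$, the identity $\id$, and the distinguished multiplication $m$, via the formulas recalled right after Definition~\ref{Operad with multiplication}. Since Theorem~\ref{operad isomorphism} gives us that $\Phi$ is an operad isomorphism sending $m_{\mathrm{S}}$ to $m_{\mathrm{H}}$, every derived piece of data should transport along $\Phi$ automatically, so there is essentially no obstacle — the work is just bookkeeping.

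Concretely, I would verify the three pieces in order. First, the braces on $\mathcal{O}[1]$ are defined by formula \eqref{braces}, which involves only $\gamma$, the identity of $\mathcal{O}$, shuffles, and Koszul signs. Because $\Phi$ preserves $\gamma$ (equivalently, all $\circ_j$) and sends $\id$ to $\id$, and because shuffle sums and Koszul signs depend only on the degrees (which $\Phi$ preserves, being a graded map), applying $\Phi$ term by term to the right-hand side of \eqref{braces} for $C^\bullet(P;k)$ yields the corresponding expression for $C^\bullet(kP,S;kP)$. Hence $\Phi$ intertwines the brace operations $E_k$.

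Second, the dot product is given by $x \cdot y := (-1)^{|x|}s^{-1}(m\{sx,sy\})$. Using $\Phi(m_{\mathrm{S}}) = m_{\mathrm{H}}$ from the main theorem together with the preservation of braces from the previous step, we get $\Phi(x\cdot y) = \Phi(x)\cdot \Phi(y)$. The differential is $d(sx) = m\circ sx - (-1)^{|sx|}sx\circ m$, which is again a combination of $\circ = E_1$ with the distinguished multiplication, so the same reasoning gives $\Phi \circ d = d\circ \Phi$. Together these say that $\Phi$ is a morphism of the tuples $(M,\{E_k\},\cdot,d)$ from Definition~\ref{hGA definition}, and it is a bijection by Theorem~\ref{operad isomorphism}.

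The only point worth flagging is the sign and suspension conventions: one must check that $\Phi$ commutes with the desuspension $s$, which is automatic because $\Phi$ is defined level-wise on the underlying graded vector spaces and the shift $[1]$ is a purely formal re-indexing. Once this is noted, the corollary is an immediate transport of structure along the operad-with-multiplication isomorphism $\Phi$, and no computation beyond what is already in the proof of Theorem~\ref{operad isomorphism} is required.
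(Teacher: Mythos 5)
Your proposal is correct and matches the paper's approach: the paper treats this corollary as an immediate consequence of Theorem~\ref{operad isomorphism}, relying on the observation (stated just before the relevant remark) that the hG-algebra construction is a functor from operads with multiplication to hG-algebras. Your more detailed bookkeeping of the braces, dot product, and differential is just an expansion of that same functoriality argument.
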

\begin{Corollary}[Gerstenhaber-Shack Theorem] \label{Gerstenhaber-Shack Theorem}
   The map $\Phi: C^\bullet(P; k) \rightarrow C^\bullet(kP, S; kP)$ is an isomorphism of DGA's. \qed
\end{Corollary}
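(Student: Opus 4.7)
My plan is to verify three things in sequence: first, that $\Phi$ is a well-defined $k$-linear isomorphism (its image really lies in the $S$-relative Hochschild complex, and it has an inverse); second, that $\Phi$ is a morphism of operads, which by the criterion stated just after the $\circ_j$ definition reduces to checking $\Phi(\id) = \id$ and $\Phi(f \circ_j g) = \Phi f \circ_j \Phi g$; and third, that $\Phi$ carries the multiplication $m_\mathrm{S}$ to $m_\mathrm{H}$.

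For the bijection, I would first pin down a canonical basis of $C^n(kP, S; kP)$. Since $S$ is spanned by the idempotents $E^{ii}$, the three $S$-relativity conditions combined with the rules $E^{ii} E^{jk} = \delta_{ij} E^{jk}$ and $E^{jk} E^{ii} = \delta_{ki} E^{jk}$ force any $f \in C^n(kP, S; kP)$ to vanish on a tuple unless it is a composable chain, and to take values in $E^{i_0 i_0} \cdot kP \cdot E^{i_n i_n} = k \cdot E^{i_0 i_n}$ on a composable chain $(E^{i_0 i_1}, \ldots, E^{i_{n-1} i_n})$. Thus $f$ is determined by scalars indexed by chains $i_0 \leq \cdots \leq i_n$ in $P$, i.e.\ by a simplicial cochain on the nerve, and this correspondence is exactly $\Phi$ (with the well-definedness of $\Phi$ itself following from the same idempotent identities). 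In degree zero, $C^0(kP, S; kP)$ is the centralizer of $S$ in $kP$, which a direct check shows is spanned by the $E^{ii}$, matching $C^0(P; k) = k^P$ under $\Phi$.

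The operadic calculation is the heart of the argument. I would fix a composable chain $(E^{i_0 i_1}, \ldots, E^{i_{p+q-2} i_{p+q-1}})$ and evaluate both sides. From the simplicial $\gamma$-formula, $(f \circ_j g)(i_0, \ldots, i_{p+q-1}) = f(i_0, \ldots, i_{j-1}, i_{j+q-1}, \ldots, i_{p+q-1}) \cdot g(i_{j-1}, \ldots, i_{j+q-1})$, so $\Phi(f \circ_j g)$ applied to the chain produces that product times $E^{i_0 i_{p+q-1}}$. On the Hochschild side, $\Phi g$ applied to the inner block $(E^{i_{j-1} i_j}, \ldots, E^{i_{j+q-2} i_{j+q-1}})$ outputs the scalar $g(i_{j-1}, \ldots, i_{j+q-1})$ times the single morphism $E^{i_{j-1}, i_{j+q-1}}$; feeding this into slot $j$ of $\Phi f$ and using $k$-linearity of $\Phi f$ in that slot pulls the scalar out, leaving $\Phi f$ to evaluate on the collapsed chain $(E^{i_0 i_1}, \ldots, E^{i_{j-2} i_{j-1}}, E^{i_{j-1}, i_{j+q-1}}, E^{i_{j+q-1} i_{j+q}}, \ldots, E^{i_{p+q-2} i_{p+q-1}})$. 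By definition of $\Phi f$, this is $f(i_0, \ldots, i_{j-1}, i_{j+q-1}, \ldots, i_{p+q-1}) \cdot E^{i_0 i_{p+q-1}}$, and the two expressions agree. The identity on each side is easily seen to correspond. For the multiplication, a one-line computation gives $\Phi(m_\mathrm{S})(E^{i_0 i_1}, E^{i_1 i_2}) = m_\mathrm{S}(i_0, i_1, i_2) E^{i_0 i_2} = E^{i_0 i_2} = E^{i_0 i_1} \cdot E^{i_1 i_2} = m_\mathrm{H}(E^{i_0 i_1}, E^{i_1 i_2})$, so $\Phi(m_\mathrm{S}) = m_\mathrm{H}$ by agreement on a spanning set.

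The argument is essentially index bookkeeping once the basis for the relative Hochschild complex is correctly identified; I expect the main obstacle is simply organizing the $\circ_j$ calculation cleanly, especially tracking how the inner $g$-chain of length $q$ collapses to the single morphism $E^{i_{j-1}, i_{j+q-1}}$ and why that is precisely what makes $\Phi f$ see the index-skipping pattern of the simplicial circle-$j$ product.
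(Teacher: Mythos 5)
Your proposal is correct and follows essentially the same route as the paper: the corollary is obtained from the main theorem that $\Phi$ is an isomorphism of operads with multiplication (the $\circ_j$ computation and the check $\Phi(m_{\mathrm S}) = m_{\mathrm H}$ are exactly the paper's), together with the functoriality of the DGA construction on operads with multiplication, which you leave implicit but which is the paper's own final step. The one place you go beyond the paper is in justifying the bijectivity of $\Phi$ via the idempotent relations identifying a basis of $C^n(kP,S;kP)$, a point the paper dismisses as "clearly an isomorphism of bases."
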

t brace of a hG-algebra $G$ is pre-Lie with respect to the degree on ${G[1]}$. That is, for $f, g \in G[1]$, $[f, g] := f \circ g  - (-1)^{|f| |g|} g \circ f$ is a Lie bracket. Since the bracket is completely construc determinedrace operation, the differential graded Lie algebras (DGLA) derived from isomorphic operads with multiplications are isomorphic, we obtain the following corollary.
\begin{Corollary} \label{DGLA isomorphism Corollary}
    $\Phi: (C^{\bullet}(P; R)[1], ; k -], d) \rightarrow (C^{\bullet} (RP, S; RPk[1], [k, -], d)$ is a DGLA isomorphism. \qed
\end{Corollary}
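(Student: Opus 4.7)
The plan is to deduce this at once from Corollary \ref{hGA isomorphism Corollary}, using the observation flagged in the paragraph preceding the corollary: the DGLA structure on the shift of an hG-algebra is assembled entirely out of the first brace $E_1$ (i.e., the circle product $\circ$) and the differential $d$. Concretely, for any hG-algebra $(G, \{E_k\}, \cdot, d)$, the bracket on $G[1]$ is
\[
[f, g] := f \circ g - (-1)^{|f||g|} g \circ f,
\]
which only involves $E_1$. Hence any morphism that preserves $E_1$ and $d$ automatically preserves the bracket.

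First I would verify, as a general lemma about hG-algebras, that $(G[1], [-,-], d)$ is a DGLA. Graded antisymmetry is automatic. For the graded Jacobi identity, I would specialize the brace axiom to $m = 1$, $n = 2$ to obtain the pre-Lie relation
\[
(x \{ x_1 \})\{ x_2 \} - x\{ x_1 \{ x_2 \}\} = x\{ x_1, x_2 \} + (-1)^{(|x_1|-1)(|x_2|-1)} x\{ x_2, x_1 \},
\]
whose right-hand side is graded-symmetric in $(x_1, x_2)$; antisymmetrizing the associator gives the pre-Lie identity for $\circ$, from which graded Jacobi follows formally. For the Leibniz identity $d[f, g] = [df, g] + (-1)^{|f|}[f, dg]$, I would invoke axiom 5 of Definition \ref{hGA definition} with $n = 1$, which expresses $d(sx\{sx_1\})$ as $d(sx)\{sx_1\} \pm sx\{dsx_1\}$ up to dot-product correction terms; those corrections cancel upon antisymmetrizing in $(x, x_1)$, using that $d$ is a derivation of $\cdot$ from the DGA axiom.

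Granting this general fact, the corollary is immediate: by Corollary \ref{hGA isomorphism Corollary}, $\Phi$ is an isomorphism of hG-algebras, so it preserves $E_1$ and $d$ on the shifted complexes, and therefore preserves $[-,-]$. Being bijective, it is a DGLA isomorphism. The only non-trivial step is the sign bookkeeping in the derivation identity, which is a mechanical consequence of axiom 5; everything else is functorial transport of structure along $\Phi$.
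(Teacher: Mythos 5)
Your proposal is correct and follows essentially the same route as the paper: the paper's justification for this corollary is precisely the remark preceding it, namely that the bracket $[f,g] = f \circ g - (-1)^{|f||g|} g \circ f$ is built entirely from the first brace (which is pre-Lie) and hence is transported by any isomorphism of operads with multiplication, the corollary then following from Theorem \ref{operad isomorphism} and Corollary \ref{hGA isomorphism Corollary} with no further argument. Your additional verification that $(G[1],[-,-],d)$ is a DGLA (pre-Lie identity from the brace axiom, Leibniz from axiom 5 — or even more directly from $d = [m,-]$ and the Jacobi identity) is a legitimate filling-in of details the paper takes as known.
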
section{Moduli Space of Deformations of Incidence Algebra}

% \begin{theorem}
%     Let $X$ be a simplicial set and $(R, \mathfrak{m})$ be a complete local algebra. Then the DGLA $C^\bullet (X; \mathfrak{m})$ is homotopy abelian and therefore formal.
% \end{theorem}

% \begin{proof}
% \[
% d \exp (\gamma) = \exp (d \gamma + \tfrac{1}{2} [\gamma, \gamma])
% \]
% \end{proof}

The Maurer-Cartan space $\MC_\bullet(\mathfrak{g})$, also known as the nerve of a DGLA $\mathfrak{g}$, is invariant under quasi-isomorphisms up to homotopy equivalence \cite{CostelloPartitionFunctionOfATFTzbMATH05665533}, \cite{Getzler_2009_Lie_Theory_For_Nil_L_inf_Algebras}. We introduce the Maurer-Cartan space and use it compute the formal deformations of the incidence algebra $kP$ using the big Witt vectors $W$. We show the moduli space of formal deformations of $kP$, $\pi_0(\MC_\bullet(\lambda C^\bullet(kP; kP)[[\lambda]]))$, is isomorphic $H^2(P; W)$.

\begin{definition}
    Let $\Omega_\bullet$ be the simplicial differential graded commutative algebra (DGCA) defined by
    \[
    \Omega_n := \frac{R[ t_0, \ldots, t_n, dt_0, \ldots, dt_n ]}{\big( \sum t_i = 1, \sum dt_i = 0 \big)}, \qquad \deg(t_i) = 0, \deg(dt_i) = 1.
    \]
\end{definition}

Let $\mathfrak{g}$ be a DGLA over $k$. Let the \textit{Maurer-Cartan set} of $\mathfrak{g}$ be defined by
\[
\MC(\mathfrak{g}) := \{ a \in \mathfrak{g}^1 \mid d a + \frac{1}{2} [a, a] = 0 \}.
\]
The equation $d a + \frac{1}{2} [a, a] = 0$ is called the \textit{Maurer-Cartan equation}. Let us assume that $\mathfrak{g}^0$ is a pro-nilpotent Lie algebra over $k$. We associate with it the group $\exp(\mathfrak{g}^0)$ of formal symbols
$\exp(x)$, $x \in \mathfrak{g}$, with multiplication given by the Campbell-Baker-Hausdorff formula \cite[Section 1.4]{kontsevich-soibelmanDeformationTheoryI}. The group $\exp(\mathfrak{g}^0)$ is called the \emph{gauge transformation group} and has a natural action on $\MC(\mathfrak{g})$. See more discussions at \cite[Chapter 6]{Manetti-Lie-methods-in-deformation-theory} and \cite[Section 2]{GoldmanWilliamMillsonDeformationAndCOmpactKahlerManifolds}.

The \textit{derived $($Maurer-Cartan$)$ moduli space}, or \textit{nerve}, of $\mathfrak{g}$ is defined by 
\[
\MC_\bullet(\mathfrak{g}) := \MC( \mathfrak{g} \otimes_R \Omega_\bullet).
\]
Let
\[
\mathscr{MC}(\mathfrak{g}) := \pi_0 (\MC_\bullet (\mathfrak{g}))
\]
be the \emph{$($Maurer-Cartan$)$ moduli space}, which can be identified with the orbit space $\MC (\mathfrak{g}) / \exp(\mathfrak{g}^0)$ \cite{Getzler_2009_Lie_Theory_For_Nil_L_inf_Algebras}. This is viewed here as a pointed set with base point 0.

\begin{definition}
    A \textit{formal deformation} on an associative algebra $A$ is an associative $k[[\lambda]]$-linear map $F \colon A[[\lambda]] \otimes_{k[[\lambda]]} A[[\lambda]] \rightarrow A[[\lambda]]$ of the form
    \[
    F = m_A + \lambda F_1 + \lambda^2 F_2 + \dots,
    \]
where $m_A : A \otimes_R A \rightarrow A$ is the original product on $A$ and, for each $n \ge 1$, $F_n \colon A \otimes_R A \to A$ is an $k$-linear map, extended by $\lambda$-linearity, such that and $F(1, a) = F(a, 1) = a$ for all $a \in A$.
    Two deformations $F$, $F'$ are \textit{equivalent} if there is an $k[[\lambda]]$-algebra isomorphism $f \colon (A[[\lambda]],F') \to (A[[\lambda]],F)$ of the form$ f = \id_A + \sum_{n \in \mathbb{Z}^+} f_n \lambda^n$.
    % such that $f ( F' (a, b)) = F (f(a), f(b))$ for $a, b \in A$. 
\end{definition}

Any formal deformation of the incidence algebra $A =kP$ may be written as
\begin{equation}
    \label{General deformation}
    F = m_{kP} + \sum_{n \in \mathbb{Z}^+} F_{n} \lambda^n \in C^2 (kP; kP)[[\lambda]].
\end{equation}
with $F_n \in C^2 (kP; kP)$.
%and $m_{kP}$ being the product on $kP$,
Then the associativity condition on $F$ may be written as
\[
F(F(a,b),c) = F(a,F(b,c)) \qquad \text{for all $a,b,c \in kP$}.
\]
This is equivalent to the condition that $F \in \mathcal{E}nd_{kP} (2) [[\lambda]]$ is an operad multiplication:
\begin{equation}
    \label{assoc}
   F \circ F = 0 \quad \text{or, equivalently,} \quad [ F, F ] = 0,
\end{equation}
where the circle product and bracket are extended from $C^2(kP; kP)$ to $C^2 (kP; \linebreak[0] kP)[[\lambda]]$ by $\lambda$-linearity.
Since the Hochschild differential may be expressed through the Gerstenhaber bracket: $d = [m_A, -]$, the formal deformation $F$ corresponds to a formal solution $\tilde{F} \coloneqq \sum_{n \in \mathbb{Z}^+} F_{n} \lambda^n$ of the Maurer-Cartan equation
\[
d\tilde{F} + \frac{1}{2} [\tilde{F}, \tilde{F}] = 0.
\]
The following correspondence has been known \cite[ch. 4]{MarklMartinDefTheoryOfAlgebrasanddiagrams}.
\begin{equation}
\label{form-def}
\left\{ \begin{array}{cc}
     \mbox{Equivalence classes of}  \\
     \mbox{formal deformations of } kP  
\end{array} \right\} \cong \mathscr{MC} ( \lambda C^\bullet (kP; kP)[[ \lambda]]).
\end{equation}

The next lemma shows that a formal deformation $F$ of $kP$ implies the existence of an equivalent deformation $F'$ such that $F_n' \in C^2(kP, S; kP)$ for $n > 0$.

\begin{Lemma} \label{Relative Quasi-isomorphic to Absolute implies same deformation theory}
\begin{sloppypar}
        There is a homotopy equivalence of derived Maurer-Cartan spaces
    \(
    \MC_\bullet(\lambda C^\bullet(kP; kP)[[\lambda]]) \simeq \MC_\bullet(\lambda C^\bullet(kP, S; kP))[[\lambda]]).
    \)
\end{sloppypar}
\end{Lemma}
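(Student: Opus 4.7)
The plan is to apply Getzler's theorem \cite{Getzler_2009_Lie_Theory_For_Nil_L_inf_Algebras}, which states that the functor $\MC_\bullet$ sends quasi-isomorphisms of pro-nilpotent DGLAs to homotopy equivalences of simplicial sets. Consequently it suffices to exhibit a DGLA quasi-isomorphism between $\lambda C^\bullet(kP, S; kP)[[\lambda]]$ and $\lambda C^\bullet(kP; kP)[[\lambda]]$ that respects the pro-nilpotent filtrations by powers of $\lambda$.

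First I would take the natural inclusion $\iota \colon C^\bullet(kP, S; kP) \hookrightarrow C^\bullet(kP; kP)$ and verify that it is a map of DGLAs. The defining $S$-relativity conditions are preserved under the Gerstenhaber bracket because $C^\bullet(kP, S; kP)$ is a sub-operad of $\mathcal{E}nd_{kP}$ (by the sub-operad proposition of Section~1), and the bracket is constructed from the $\circ_j$ operations via the brace. Preservation under the Hochschild differential $d = [m_\mathrm{H}, -]$ follows from the same fact together with $m_\mathrm{H} \in C^2(kP, S; kP)$.

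The heart of the argument is showing that $\iota$ is a quasi-isomorphism. As an $S$-bimodule, $kP$ decomposes as $\bigoplus_{i \leq j} k E^{ij}$, and each summand is a free left $E^{ii}k$-module and a free right $kE^{jj}$-module of rank one; hence $kP$ is projective as an $S$-bimodule. Under this hypothesis the standard normalized-versus-unnormalized comparison for Hochschild cochains (equivalently, the observation in Gerstenhaber-Shack \cite{GS.NATO} that the relative Hochschild complex is a deformation retract of the absolute one) gives that $\iota$ induces an isomorphism on cohomology. This is the main obstacle, and I expect to handle it either by directly citing this classical reduction or by writing down the retraction explicitly using the idempotents $E^{ii}$.

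Finally, extending $\iota$ by $\lambda$-linearity gives a DGLA map $\lambda C^\bullet(kP, S; kP)[[\lambda]] \to \lambda C^\bullet(kP; kP)[[\lambda]]$ between DGLAs that are pro-nilpotent by virtue of the filtration by positive powers of $\lambda$. Since $\iota$ is a quasi-isomorphism on the associated graded pieces, and both sides are the inverse limits of their truncations modulo $\lambda^{n+1}$, the extension is itself a quasi-isomorphism of pro-nilpotent DGLAs. Getzler's theorem then delivers the claimed homotopy equivalence of derived Maurer-Cartan spaces.
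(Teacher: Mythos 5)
Your proposal is correct and follows essentially the same route as the paper: the inclusion $C^\bullet(kP,S;kP)\hookrightarrow C^\bullet(kP;kP)$ is a DGLA quasi-isomorphism, the $\lambda$-adic filtration makes both extended DGLAs pro-nilpotent, and the Getzler/Costello invariance of $\MC_\bullet$ under quasi-isomorphisms of pro-nilpotent DGLAs finishes the argument. The only difference is that the paper simply cites Gerstenhaber--Schack for the quasi-isomorphism of the inclusion, whereas you sketch the underlying separability/projectivity argument; your extra care in checking that the quasi-isomorphism survives the $\lambda$-linear extension (via the associated graded and inverse limits) is a point the paper asserts without comment.
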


\begin{proof}
\begin{sloppypar}
     The inclusion $i: C^\bullet(kP, S; kP) \hookrightarrow C^\bullet(kP; kP)$ is a DGLA quasi-isomorphism \cite[page 137]{GS.NATO}. Then $i \otimes \id_{\lambda R [[\lambda]]}: \lambda C^\bullet(kP, S; kP) [[\lambda]] \hookrightarrow \lambda C^\bullet(kP; kP) [[\lambda]]$ is also a DGLA quasi-isomorphism. The ring $\lambda k[[\lambda]] = \varprojlim_n \lambda R[\lambda] / \lambda^n$ is pro-nilpotent, so the DGLAs $\lambda C^\bullet(kP, S; kP)[[\lambda]]$ and $\lambda C^\bullet (kP; kP)[[\lambda]]$ are also pro-nilpotent. 
     Then the standard results of the homotopy equivalence of the nerves for quasi-isomorphic pro-nilpotent DGLAs, explicitly
     % Let both DGLAs be filtered trivially, then 
     \cite[Lemma 5.3.1]{CostelloPartitionFunctionOfATFTzbMATH05665533}, see also \cite[Proposition 4.9]
     %Theorem 4.8]
     {Getzler_2009_Lie_Theory_For_Nil_L_inf_Algebras}, implies the statement of the lemma. \qedhere
\end{sloppypar}
\end{proof}

Corollary \ref{DGLA isomorphism Corollary} and Lemma \ref{Relative Quasi-isomorphic to Absolute implies same deformation theory} implies the following corollary.

\begin{Corollary} \label{connecting corollary}
    There is a homotopy equivalence of derived Maurer-Cartan spaces $\MC_\bullet(\lambda C^\bullet(kP; kP)[[\lambda]]) \simeq \MC_\bullet(\lambda C^\bullet(P; k))[[\lambda]])$. \qed
\end{Corollary}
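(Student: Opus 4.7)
The plan is to chain together the two results cited: the DGLA isomorphism of Corollary \ref{DGLA isomorphism Corollary} and the homotopy equivalence of Lemma \ref{Relative Quasi-isomorphic to Absolute implies same deformation theory}. The strategy is simply to show that the Maurer-Cartan space functor takes us through both, giving a zig-zag of homotopy equivalences.

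First, I would observe that the map $\Phi$ from Corollary \ref{DGLA isomorphism Corollary} is a DGLA isomorphism between $C^\bullet(P;k)[1]$ and $C^\bullet(kP,S;kP)[1]$. Extending scalars $\lambda$-linearly, we obtain a DGLA isomorphism
\[
\Phi \otimes \id : \lambda C^\bullet(P;k)[[\lambda]] \xrightarrow{\ \cong\ } \lambda C^\bullet(kP,S;kP)[[\lambda]].
\]
Since $\MC_\bullet$ is a functor on DGLAs, any DGLA isomorphism induces an isomorphism (and in particular a homotopy equivalence) of simplicial sets $\MC_\bullet$, so we get
\[
\MC_\bullet(\lambda C^\bullet(P;k)[[\lambda]]) \simeq \MC_\bullet(\lambda C^\bullet(kP,S;kP)[[\lambda]]).
\]

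Second, Lemma \ref{Relative Quasi-isomorphic to Absolute implies same deformation theory} already supplies the homotopy equivalence
\[
\MC_\bullet(\lambda C^\bullet(kP,S;kP)[[\lambda]]) \simeq \MC_\bullet(\lambda C^\bullet(kP;kP)[[\lambda]]).
\]
Concatenating the two homotopy equivalences yields the claim.

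There is no real obstacle here; the corollary is exactly a composition of previously established facts. The only minor bookkeeping issue is to be careful that the $\lambda$-linear extension of a DGLA isomorphism is still a DGLA isomorphism and that extending and desuspending commute, both of which are routine. Since these are isomorphisms (not merely quasi-isomorphisms), one does not even need to invoke the pro-nilpotency argument used in Lemma \ref{Relative Quasi-isomorphic to Absolute implies same deformation theory} for this half of the zig-zag.
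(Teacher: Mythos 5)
Your proposal is correct and matches the paper's argument exactly: the paper also obtains this corollary by composing the DGLA isomorphism of Corollary \ref{DGLA isomorphism Corollary} (extended $\lambda$-linearly) with the homotopy equivalence of Lemma \ref{Relative Quasi-isomorphic to Absolute implies same deformation theory}. Your added remark that the isomorphism half of the zig-zag needs no pro-nilpotency argument is a correct and harmless refinement.
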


%\begin{sloppypar}
We would like to conclude with the identification of the Maurer-Cartan moduli space $\mathscr{MC} (\lambda C^\bullet(P; k)[[\lambda]])$.
%\end{sloppypar}

Corollary \ref{connecting corollary} implies any solution of the Maurer-Cartan equation in the DGLA $\lambda C^\bullet(P; k)[[\lambda]]$
% formal deformation of $kP$
can be identified with an element
\begin{equation}
    \label{deformation}
\omega = m_\mathrm{S} + \sum_{n \in \mathbb{Z}^+} \omega_{n} \lambda^n \in C^2 (P; k)[[\lambda]] = C^2 (P; k[[\lambda]])
\end{equation}
that satisfies \ref{assoc}. That is, $\tilde{\omega} := \omega - m_\mathrm{S}$ satisfies the Maurer-Cartan equation 
\begin{equation}
\label{MCE}
d\tilde{\omega} + \frac{1}{2} [\tilde{\omega}, \tilde{\omega}] = 0.
\end{equation}
By $\lambda$-linearity, we have for $\omega$ and $\rho \in C^\bullet (P; k)[[\lambda]]$,
\begin{equation*}
    [\omega, \rho] = \sum_{n \in \mathbb{Z}^+} \sum_{p + q = n} [\omega_p, \rho_q] \lambda^n \qquad \text{and} \qquad d \omega := \sum_{n \in \mathbb{Z}^+} (d \omega_n) \lambda^n.
\end{equation*}
Collecting the coefficients in equation \eqref{MCE} by each power of $\lambda$, we get the collection of equations
\begin{equation}
\label{MCE-term}
    d \omega_n + \frac{1}{2}\sum_{p + q = n} [\omega_p, \omega_q] = 0, \qquad n \in \mathbb{Z}^+.
\end{equation}
Since all the $\omega_n$'s are 2-cochains, $[\omega_p, \omega_q] = \omega_p \circ \omega_q  - (-1)^{|\omega_p| |\omega_q|} \omega_q \circ \omega_p = \omega_p \circ \omega_q  + \omega_q \circ \omega_p$. We rearrange the terms in the sum and obtain 
\begin{equation} \label{MCE term with circ}
    d \omega_n + \sum_{p + q = n} \omega_p \circ \omega_q = 0.
\end{equation}

Let $W$ be the abelian multiplicative group of the big Witt ring $1_R + \lambda k[[\lambda]]$. Then every $\omega \in C^2(P; W)$ is of the same form \eqref{deformation}. Then $d \omega = 1_W$ if and only if $(d_0 \omega) (d_2 \omega) = (d_1 \omega) (d_3 \omega)$. Term by term, this equation is equivalent to the collection of equations
\begin{equation}
    d_0 \omega_n + d_2 \omega_n + \sum_{p + q = n} (d_0 \omega_p)(d_2 \omega_q) = d_1 \omega_n + d_3 \omega_n + \sum_{p + q = n} (d_1 \omega_p) (d_3 \omega_q).
\end{equation}
The terms involving $\omega_n$ form the differential $d\omega_n$ in $C^\bullet(P;R)$. Moving everything to the left-hand side, we obtain 
\begin{equation}
    d \omega_n + \sum_{p + q = n} (d_0 \omega_p) (d_2 \omega_q) - (d_1 \omega_p)(d_3 \omega_q) = d \omega_n + \sum_{p + q = n} \omega_p \circ \omega_q  = 0,
\end{equation}
which is the same as \eqref{MCE term with circ}. Thus, we see that the associativity equation \eqref{assoc} for $\omega$ as in \eqref{deformation} is equivalent to $d \omega = 1_W$ for the same $\omega$ considered as a cochain in $C^2(P; W)$. So we obtain the following lemma.

\begin{Lemma} \label{Kernel of H2(P; W) is formal deformation}
    An element $\omega \in \lambda C^2(P; k)[[\lambda]]$ is an MC element 
    % An element $\omega \in C^2(P; W)$ is a formal deformation of $C^2(P; k)$ =
    if and only if $\omega \in Z^2(P; W)$. \qed
\end{Lemma}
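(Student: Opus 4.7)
The plan is to package the chain of equivalences already established in the displayed computation immediately preceding the lemma into a formal biconditional; almost no additional calculation is required.

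First, I identify the underlying sets. Any $\omega \in \lambda C^2(P;k)[[\lambda]]$, written as $\omega = \sum_{n \in \mathbb{Z}^+}\omega_n \lambda^n$, corresponds bijectively to the $W$-valued $2$-cochain $1_W + \omega \in C^2(P; W)$, since $W = 1_R + \lambda k[[\lambda]]$ is precisely the set of power series with constant term $1$, endowed with its multiplicative group structure.

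Second, I unfold each side coefficient by coefficient in $\lambda$. On the Maurer-Cartan side, the equation $d\omega + \tfrac{1}{2}[\omega,\omega] = 0$ splits by $\lambda$-degree into the system \eqref{MCE-term}; using that every $\omega_n$ has degree $2$, the Koszul sign collapses $[\omega_p,\omega_q]$ into $\omega_p \circ \omega_q + \omega_q \circ \omega_p$, so the system takes the form \eqref{MCE term with circ}. On the cocycle side, the multiplicative cocycle condition $(d_0\omega)(d_2\omega) = (d_1\omega)(d_3\omega)$, expanded in $\lambda$, produces at each order $n$ a linear contribution $(d_0 - d_1 + d_2 - d_3)\omega_n = d\omega_n$ in $C^\bullet(P;k)$ together with bilinear contributions $\sum_{p + q = n}\bigl[(d_0\omega_p)(d_2\omega_q) - (d_1\omega_p)(d_3\omega_q)\bigr]$.

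The only real step is recognizing that these bilinear contributions equal $\omega_p \circ \omega_q$ in the simplicial operad. I would verify this by specializing the brace formula \eqref{braces} to a pair of $2$-cochains in $C^\bullet(P;k)$, where the operad identity is the constant $1$-cochain; this reduces to a short bookkeeping with the four face maps. The main obstacle is thus computational rather than conceptual, and once the bilinear terms are matched, both conditions reduce to the same system \eqref{MCE term with circ} coefficient by coefficient, yielding the biconditional.
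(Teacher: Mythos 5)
Your proposal is correct and follows the paper's argument essentially verbatim: the paper proves this lemma precisely by the displayed computation preceding it (expanding the Maurer--Cartan equation by powers of $\lambda$ into \eqref{MCE term with circ}, expanding the multiplicative cocycle condition $d\omega = 1_W$ the same way, and matching the bilinear face-map terms with $\omega_p \circ \omega_q$), which is why the lemma is stated with \qed{} and no separate proof. The one step you flag as needing verification --- that $(d_0\omega_p)(d_2\omega_q) - (d_1\omega_p)(d_3\omega_q)$ equals $\omega_p \circ \omega_q$ for $2$-cochains --- is exactly the identification the paper also asserts, and it does reduce to the short face-map bookkeeping you describe.
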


\begin{Lemma} \label{Coboundary of H2(P; W) models formal deformation equivalence}
    Two
    % formal deformations
    Maurer-Cartan elements $\omega$ and $\omega' \in \lambda C^2(P; k)[[\lambda]]$ are equivalent if and only if they are cohomologous in $C^2(P; W)$.
\end{Lemma}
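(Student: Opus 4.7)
The plan is to pass through the equivalence of formal deformations of the incidence algebra $kP$. By Corollary~\ref{DGLA isomorphism Corollary}, Lemma~\ref{Relative Quasi-isomorphic to Absolute implies same deformation theory}, and the standard bijection \eqref{form-def}, two elements $\omega, \omega' \in \lambda C^2(P; k)[[\lambda]]$ are gauge equivalent if and only if the corresponding formal deformations $F := m_{kP} + \Phi(\omega)$ and $F' := m_{kP} + \Phi(\omega')$ of $kP$ are equivalent; moreover, any such equivalence can be realized by an algebra automorphism of the form $f = \exp(\Phi\eta)$ for some $\eta \in \lambda C^1(P; k)[[\lambda]]$. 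Since $\Phi\eta$ is a relative Hochschild $1$-cochain, it must scale basis elements: $\Phi\eta(E^{ij}) = \eta(i, j) E^{ij}$, and therefore $f(E^{ij}) = \psi(i, j) E^{ij}$, where $\psi := \exp\eta \in C^1(P; W)$ because $\eta(i, j) \in \lambda k[[\lambda]]$.

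The first step is to check that the formal gauge action in the DGLA $\lambda C^\bullet(P; k)[[\lambda]]$ corresponds, through the operad with multiplication $(\mathcal{E}nd_{kP}, m_\mathrm{H})$ and $\Phi$, to the classical conjugation $F' = f^{-1} \circ F \circ (f \otimes f)$ of multiplications on $kP[[\lambda]]$. This reduces to verifying that the infinitesimal action $[\eta, \tilde\omega] - d\eta$ in the DGLA matches
\[
[\Phi\eta, F](a, b) = \Phi\eta(F(a, b)) - F(\Phi\eta(a), b) - F(a, \Phi\eta(b)),
\]
which is a direct calculation with the hG-algebra brace formulas of Definition~\ref{hGA definition}.

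Once this is in place, a short computation on basis elements yields
\[
F'(E^{ij}, E^{jk}) = f^{-1}\bigl(F(f E^{ij}, f E^{jk})\bigr) = \psi(i, j)\,\psi(j, k)\,\psi(i, k)^{-1}\,(m_\mathrm{S} + \omega)(i, j, k)\, E^{ik},
\]
which identifies $m_\mathrm{S} + \omega'$ with $(m_\mathrm{S} + \omega) \cdot d\psi$ in $C^2(P; W)$, so gauge equivalent elements are cohomologous. For the converse, given $\psi \in C^1(P; W)$ with $m_\mathrm{S} + \omega' = (m_\mathrm{S} + \omega) \cdot d\psi$, set $\eta := \log \psi \in \lambda C^1(P; k)[[\lambda]]$ (pointwise logarithm in $W$); reversing the above computation exhibits $\eta$ as the gauge parameter sending $\omega$ to $\omega'$. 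The main obstacle is the sign bookkeeping in the first step: translating the Getzler-style gauge flow on $\lambda C^\bullet(P; k)[[\lambda]]$, built from the hG-algebra brace and the pre-Lie bracket on the shifted operad $\mathcal{O}[1]$, into the transparent conjugation action on $kP[[\lambda]]$ requires careful tracking of the Koszul and suspension signs introduced by the shift $\mathcal{O} \to \mathcal{O}[1]$, so that $\exp(\mathrm{ad}\,\eta)$ really becomes conjugation by the genuine algebra automorphism $\exp(\Phi\eta)$ rather than something off by a sign.
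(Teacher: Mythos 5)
Your proposal is correct and follows essentially the same route as the paper: both translate the equivalence of Maurer--Cartan elements into an intertwining condition for a degree-one gauge parameter, exponentiate it to a $W$-valued $1$-cochain $\psi$ (the paper's $\varphi$), and read off the multiplicative coboundary relation $(m_\mathrm{S}+\omega') = (m_\mathrm{S}+\omega)\cdot d\psi$ in $C^2(P;W)$. The only difference is presentational --- you carry out the conjugation computation on the Hochschild side in the $E^{ij}$ basis and explicitly flag the identification of the DGLA gauge action with algebra conjugation, whereas the paper works directly with the simplicial equation $\varphi(\omega') = \omega(\varphi\otimes\varphi)$ and takes that identification for granted.
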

\begin{proof}
    Suppose $\omega$ and $\omega'$ are equivalent. Then there is $\varphi \in C^1(P; W)$ such that $\varphi  (\omega') = \omega ( \varphi \otimes \varphi)$. Rewriting this equation with the operad notation we obtain $(d_1 \varphi) \omega_q = (d_0 \varphi) (d_1 \varphi) \omega'$, where $d_i$'s are face maps. Rearranging the terms, we get $\omega = (d \varphi ) \omega'$, which means $\omega$ and $\omega'$ are cohomologous. Mirroring the process yeilds the proof for the reverse direction.
\end{proof}

\begin{theorem}
\label{Simplicial moduli space computation theorem}
The Maurer-Cartan moduli space $\mathscr{MC} (\lambda C^\bullet(P; k)[[\lambda]])$ is isomorphic to the cohomology group $H^2 (P; W)$ of the poset $P$ with coefficients in the group of big Witt vectors: \[
\mathscr{MC} (\lambda C^\bullet(P; k)[[\lambda]]) \cong H^2 (P; W).
\]
\end{theorem}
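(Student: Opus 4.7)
The plan is to assemble Lemma~\ref{Kernel of H2(P; W) is formal deformation} and Lemma~\ref{Coboundary of H2(P; W) models formal deformation equivalence} against the definition of the Maurer-Cartan moduli space. Setting $\mathfrak{g} := \lambda C^\bullet(P;k)[[\lambda]]$, by definition $\mathscr{MC}(\mathfrak{g}) = \MC(\mathfrak{g})/\exp(\mathfrak{g}^0)$ is a pointed set with base point the class of $0$.

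First I would record the set-level bijection $\tilde{\omega} \leftrightarrow m_\mathrm{S} + \tilde{\omega}$ between $\lambda C^2(P;k)[[\lambda]]$ and $C^2(P;W)$, which sends $0$ to the constant cochain $1_W$. Lemma~\ref{Kernel of H2(P; W) is formal deformation} then identifies the subset $\MC(\mathfrak{g})$ with $Z^2(P;W)$, with the base point going to $1_W$. Next, by the correspondence~\eqref{form-def}, the orbit relation of $\exp(\mathfrak{g}^0)$ on $\MC(\mathfrak{g})$ matches equivalence of the corresponding formal deformations of $kP$, and Lemma~\ref{Coboundary of H2(P; W) models formal deformation equivalence} identifies this latter relation, on the Witt side, with being cohomologous in $C^2(P;W)$. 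Passing to orbits in both pictures yields
\[
\mathscr{MC}(\mathfrak{g}) \;\cong\; Z^2(P;W)/B^2(P;W) \;=\; H^2(P;W),
\]
which is the claimed isomorphism of pointed sets (and in fact of abelian groups, since the target carries a natural group structure inherited from $W$).

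The step that will need most care in a full write-up is reconciling the additive language of the gauge action of $\exp(\mathfrak{g}^0)$ on $\MC(\mathfrak{g})$ with the multiplicative cochain formalism on $C^\bullet(P;W)$. This bridge is exactly the content of Lemma~\ref{Coboundary of H2(P; W) models formal deformation equivalence}, but it should be emphasized that one passes through~\eqref{form-def} to translate a gauge-equivalent pair of MC elements into an equivalent pair of formal deformations before invoking the lemma; everything else is a matter of collecting the previous results.
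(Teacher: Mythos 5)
Your proposal is correct and follows essentially the same route as the paper, which likewise deduces the theorem directly by combining Lemma~\ref{Kernel of H2(P; W) is formal deformation} (identifying Maurer-Cartan elements with $Z^2(P;W)$) and Lemma~\ref{Coboundary of H2(P; W) models formal deformation equivalence} (identifying gauge equivalence with being cohomologous). The extra care you take in spelling out the set-level bijection, the base points, and the passage through~\eqref{form-def} is a reasonable elaboration of the same argument rather than a different approach.
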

\begin{proof}
    Lemmas \ref{Kernel of H2(P; W) is formal deformation} and \ref{Coboundary of H2(P; W) models formal deformation equivalence} imply the statement.
\end{proof}

\begin{remark}
    The argument we used to prove the above theorem is essentially contained in Gerstenhaber and Schack's paper \cite{GERSTENHABER198653_RelativeHochschild_Cohomology}, except that they were not aware of the DGLA structure on $C^\bullet(P; k)$ and thereby could not discuss its Maurer-Cartan moduli space. They were working directly with formal deformations of the incidence algebra $kP$ and used the fact that the DGA isomorphism (here: Corollary \ref{Gerstenhaber-Shack Theorem}) is compatible with the $\smile_1$ and circle products. The goal of their computation was the following result.
\end{remark}

\begin{Corollary}[Gerstenhaber-Shack \cite{GERSTENHABER198653_RelativeHochschild_Cohomology}]
The set of equivalence classes of formal deformations of the incidence algebra $kP$ may be naturally identified with the cohomology group $H^2(P;W)$ of the poset $P$.
\end{Corollary}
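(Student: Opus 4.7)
The plan is to compose three identifications that have already been established in the excerpt. By \eqref{form-def}, the set of equivalence classes of formal deformations of $kP$ is naturally in bijection with the Maurer--Cartan moduli space $\mathscr{MC}(\lambda C^\bullet(kP;kP)[[\lambda]])$ of the pro-nilpotent DGLA attached to the full Hochschild cochain complex. At the other end, Theorem~\ref{Simplicial moduli space computation theorem} identifies the simplicial Maurer--Cartan moduli space $\mathscr{MC}(\lambda C^\bullet(P;k)[[\lambda]])$ with $H^2(P;W)$. The remaining task is to bridge these two moduli spaces, which is precisely what Corollary~\ref{connecting corollary} supplies.

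I would carry this out in three steps. First, invoke \eqref{form-def} to replace the phrase ``equivalence classes of formal deformations of $kP$'' by the moduli space $\mathscr{MC}(\lambda C^\bullet(kP;kP)[[\lambda]])$; no new computation is needed here. Second, since $\mathscr{MC}(\mathfrak{g}) = \pi_0(\MC_\bullet(\mathfrak{g}))$ by definition and $\pi_0$ of simplicial sets is a homotopy invariant, the homotopy equivalence of derived Maurer--Cartan spaces established in Corollary~\ref{connecting corollary} immediately induces an isomorphism of pointed sets
\[
\mathscr{MC}(\lambda C^\bullet(kP;kP)[[\lambda]]) \cong \mathscr{MC}(\lambda C^\bullet(P;k)[[\lambda]]).
\]
Third, apply Theorem~\ref{Simplicial moduli space computation theorem} to identify the right-hand side with $H^2(P;W)$, and compose the three bijections.

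There is essentially no hard step left; the analytic work has already been absorbed into Corollary~\ref{DGLA isomorphism Corollary} (which matches the DGLAs $C^\bullet(kP,S;kP)$ and $C^\bullet(P;k)$) and Lemma~\ref{Relative Quasi-isomorphic to Absolute implies same deformation theory} (which shuttles between the relative and absolute Hochschild DGLAs via a quasi-isomorphism of pro-nilpotent DGLAs). The only point that deserves comment is naturality and basepoints: each of the three identifications is canonical and sends the trivial deformation $m_{kP}$ to the zero Maurer--Cartan element and then to the zero cohomology class in $H^2(P;W)$, so the composite is a pointed natural bijection, which is exactly the content of the corollary.
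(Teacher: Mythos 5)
Your proposal is correct and follows essentially the same route as the paper: it combines the identification \eqref{form-def}, the homotopy equivalence of Corollary \ref{connecting corollary} (passed through $\pi_0$), and Theorem \ref{Simplicial moduli space computation theorem}. Your explicit remark that $\pi_0$ of a homotopy equivalence of derived Maurer--Cartan spaces yields an isomorphism of the moduli spaces makes precise a step the paper leaves implicit.
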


\begin{proof}
    In view of the identification \eqref{form-def}, we need to present a natural isomorphism
    \[
    \mathscr{MC} (\lambda C^\bullet(kP; kP)[[\lambda]])) \cong H^2 (P; W),
    \]
    which follows from Theorem \ref{Simplicial moduli space computation theorem} and Corollary \ref{connecting corollary}.
    %\[
    %\mathscr{MC} (\lambda C^\bullet(kP; kP)[[\lambda]])) \cong H^2 (P; W).
    %\]
    %Lemma \ref{Relative Quasi-isomorphic to Absolute implies same deformation theory} implies 
    %\[
    %\mathscr{MC} (\lambda C^\bullet(kP, S; kP)[[\lambda]])) \cong .
    %\]
\end{proof}

\bibliographystyle{amsalpha}
\bibliography{paper}

\end{document}